\newtheorem{thm}{Theorem}
\newtheorem{lem}[thm]{Lemma}
\newtheorem{prop}[thm]{Proposition}
\newcommand{\ES}{\operatorname{ES}}
\title{Big convex polytopes or rich hyperplanes}
\author{Koki Furukawa}
\subjclass[2024]{Primary 52C10, 52A37}
\keywords{Erd\"{o}s-Szekeres theorem, Convex polytopes}
\begin{document}

\begin{abstract}
We extend the famous Erd\"{o}s-Szekeres convex polygon problem to arbitrary point sets in $\mathbb{R}^d$.
For $n, l> d \geq 2$, let $\ES_d (l,n)$ be the smallest integer $N$ such that any set of at least $N$ points in $\mathbb{R}^d$ contains either $l$ points contained in a common $(d-1)$-dimensional hyperplane or $n$ points in convex position.
In this paper, we give the upper and lower bounds for $\ES_d (l,n)$.
\end{abstract}

\maketitle
\section{Introduction}
\subsection{Erd\"{o}s-Szekeres theorem}
For $d \geq 2$, we say that a finite point set $P$ with $|P| \geq d+1$ in $\mathbb{R}^d$ is in \textit{general position} if no $(d+1)$ points lie on a common $(d-1)$-dimensional hyperplane.
We say that $P$ is in \textit{convex position} if every point of $P$ is a vertex of $conv(P)$ and they are in general position. 
For $n > d \geq 2$, let $\ES_d (n)$ be the smallest integer $N$ such that any set of (at least) $N$ points in $\mathbb{R}^d$ contains $n$ points in convex position.
In 1935, Erd\"{o}s and Szekeres \cite{ES} proved $\ES_2 (n) \leq \binom{2n-4}{n-2} + 1$.
In 1960, they \cite{ES2} showed $\ES_2 (n) \geq 2^{n-2} +1$ by constructing $2^{n-2}$ points in the plane containing no $n$ points in convex position and this bound is believed to be optimal.
The upper bound was not improved for 60 years. 
In 1998, Graham and Chung \cite{GC} improved their bound $\binom{2n-4}{n-2} + 1$ by $1$.
Shortly after, Kleitman and Pachter \cite{KP} showed that $\ES_2 (n) \leq \binom{2n-4}{n-2} +7 - 2n$.
There have been only small improvements of this upper bound. 
In 2017, Suk \cite{SUK} gave a great improvement which is $\ES_2 (n) \leq 2^{n + O(n^{\frac{2}{3}} \log n)}$. 
Shortly after, Holmsen, Mojarrad, Pach and Tardos \cite{HOLMSEN} showed that 
$\ES_2 (n) \leq 2^{n + O(\sqrt{n \log n})}$ by optimizing Suk's argument. See \cite{CC} for a clear proof.
Currently, this is the best known upper bound.

\subsection{Erd\"{o}s-Szekeres theorem for higher dimension}
Various extensions of the result have been investigated.
Erd\"{o}s and Szekeres also noted in their 1935 paper \cite{ES}, that the number $\ES_d (n)$ is finite for all $n > d \geq 3$.
they gave an upper bound $\ES_d (n) \leq R_{d+2} (n, d+3)$ where $R_{d+2} (n, d+3)$ is called the hypergraph Ramsey number which is the minimum $N$ such that every red-blue colouring of the $(d+2)$-tuples of an $N$-element set contains a red set of size $d+3$ or a blue set of size $n$. The hypergraph Ramsey number is known to be enormously large. 
We know that there is a constant $c = c (d) > 0$ such that 
$R_{d+2} (n, d+3) > twr_d (n^{c \log n})$ where the \textit{tower function} $twr_d (x)$ is defined by $twr_1 (x) = x$ and $twr_{i+1} (x) = 2^{twr_i (x)}$. 
See \cite{MS} for a detailed discussion.
Valtr \cite{Valtr} found another way to prove the existence of $\ES_d (n)$.
Consider any set $X$ of at least $\ES_{d-1} (n)$ points in general position in $\mathbb{R}^d$ 
and its projection $Y$ onto a generic $(d-1)$-dimensional hyperplane.
Since $|Y| \geq \ES_{d-1} (n)$, there is a subset $Z$ of size $n$ in convex position.
He pointed out that the set obtained by lifting $Z$ back to former space $\mathbb{R}^d$ is also in convex position. This implies that 
$$
\ES_d (n) \leq \ES_{d-1} (n) \leq \cdots \leq \ES_2 (n) (\leq 2^{n + O(\sqrt{n \log n})}).
$$
Moreover, K\'{a}rolyi \cite{Karolyi} proved that $\ES_d (n) \leq \ES_{d-1} (n-1) + 1$, and this implies $\ES_d (n) \leq \binom{2n - 2d - 1}{n - d} + 1$.
A recent result of Pohoata and Zakharov \cite{PZ} shows that $\ES_3 (n) = 2^{o(n)}$. 
Their $o(n)$ takes the form $\frac{n}{\log_{(5)} n}$ where $\log_{(k)} n$ is the $k$-th iterated logarithm function. 
The idea of their proof is notable for its continued reliance on techniques from the two-dimensional case. The key approach by them involves a projection-and-lifting technique, where they first project the point configurations from $\mathbb{R}^3$ onto $\mathbb{R}^2$, perform analysis similar to the one discussed by Suk \cite{SUK}, and then lift the results back to $\mathbb{R}^3$. 
On the other hand, K\'{a}rolyi and Valtr \cite{KV} showed that there is a constant $c = c(d) > 1$ such that
$\ES_d (n) \geq 2^{c n^{\frac{1}{d-1}}}$ for every $d \geq 2$ and this bound is believed to be optimal.

\subsection{Erd\"{o}s-Szekeres theorem for arbitrary point sets}
Let $\ES (l, n)$ be the minimum $N$ such that every $N$-element point set in the plane contians either $l$ collinear points or $n$ points in convex position. 
In 2024, Conlon et al. \cite{BLBC} proved that there is a constant $C > 0$ such that for each $l, n \geq 3$, 
$$
(3l-1) \cdot 2^{n-5} < \ES (l, n) < l^2 \cdot 2^{n + C \sqrt{n \log n}}.
$$

It is natural to consider the generalization of this problem to higher dimensions, that is, to find $n$ points in convex position in a sufficiently large point sets that contains no $l$ points on the same hyperplane.
$n,l> d \geq 2$, let $\ES_d (l,n)$ be the smallest integer $N$ such that any set of at least $N$ points in $\mathbb{R}^d$ contains either $l$ points contained in a common $(d-1)$-dimensional hyperplane or $n$ points in convex position.
Therefore, $\ES_2 (l, n) = \ES (l, n)$ holds.
We prove the following upper and lower bounds for $\ES_d (l, n)$.

\begin{thm} \label{thm1.1}
For $d \geq 3$,
$$
\ES_d (l, n) \leq \frac{l-d}{d!} \cdot 2^{O(\frac{dn}{\log_{(4)} n})}.
$$
\end{thm}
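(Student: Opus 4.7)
The plan is to reduce the dimension by a sequence of central projections, invoke the planar bound of Conlon et al.\ (or, better, the three-dimensional refinement of Pohoata--Zakharov) as the base case, and lift back inductively. Given $P \subset \mathbb{R}^d$ with $|P| = N$ and no $l$ points on a common hyperplane, I would pick a point $p \in P$ and centrally project $P \setminus \{p\}$ from $p$ onto a generic hyperplane, obtaining $P' \subset \mathbb{R}^{d-1}$ of size $N - 1$. The key observation is that if $m$ points of $P'$ lay on a common hyperplane $H' \subset \mathbb{R}^{d-1}$, their preimages together with $p$ would lie on $\mathrm{aff}(\{p\} \cup H') \subset \mathbb{R}^d$, so $P$ would contain $m + 1$ points on a common hyperplane. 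Hence ``no $l$ on a hyperplane'' descends to ``no $l - 1$ on a hyperplane'' after one projection; iterating $d - 3$ times yields $P_3 \subset \mathbb{R}^3$ of size $N - (d - 3)$ with no $l - d + 3$ points coplanar.

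For the base case I would adapt the Pohoata--Zakharov argument to the $l$-sensitive setting. Their proof further projects $\mathbb{R}^3$ onto a generic plane and runs a Suk--Holmsen-style pigeonhole on cups and caps; by carrying the ``no $l'$ collinear'' constraint through that pigeonhole, one should obtain a bound of the form $\ES_3(l',n') \leq O(l') \cdot 2^{O(n'/\log_{(4)} n')}$, linear in $l'$ (rather than quadratic, as would follow from invoking Conlon et al.\ as a black box), with the iterated-logarithm factor $\log_{(4)}$ coming from the single dimension drop of Pohoata--Zakharov to the plane.

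Lifting the $n - d + 3$ convex-position points from $P_3$ back through the $d - 3$ central projections, one new vertex (the projection center) is picked up at each lift by a standard separating-hyperplane argument, producing $n$ vertices of a convex polytope in $\mathbb{R}^d$. \emph{The main obstacle} is ensuring these $n$ vertices are in \emph{strict} convex position, i.e., no $d+1$ lie on a common hyperplane: since $P$ is not globally in general position, accidental coplanarities can appear at each lift. Handling this requires choosing the projection centers and the planar convex-position witnesses carefully, and extracting a general-position subset at each lift using the surviving no-$l'$-on-a-hyperplane condition. Careful bookkeeping of the combinatorial loss---a factor of roughly $(l - k)/k$ at dimension $k$---telescopes to the prefactor $(l - d)/d!$, and combining with the three-dimensional base case yields $\ES_d(l,n) \leq ((l - d)/d!) \cdot 2^{O(dn/\log_{(4)} n)}$.
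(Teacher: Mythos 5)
There is a genuine gap, and it is exactly the one you flag yourself as ``the main obstacle.'' Your plan hinges on lifting convex-position witnesses back through $d-3$ central projections while keeping them in strict convex position, i.e.\ with no $d+1$ of them on a common hyperplane. The paper explicitly identifies this as the reason projection arguments break down for $\ES_d(l,n)$: because the ambient set is not in general position, a subset whose projection is in convex position need not be in general position upstairs, and the author states that no projection-based proof of an inequality of the form $\ES_d(l,n)\le \ES_{d-1}(l',n')$ was found. Your proposal does not resolve this: ``choosing the projection centers and the planar convex-position witnesses carefully'' and ``extracting a general-position subset at each lift'' are precisely the missing steps, and no mechanism is offered for why the surviving no-$l'$-on-a-hyperplane condition should yield a large general-position subset of the $n'$ lifted points --- a priori it only bounds the multiplicity of each degenerate hyperplane, not how many of them cut through the witness set. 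Two further steps are asserted rather than proved: the $l$-sensitive adaptation of Pohoata--Zakharov in $\mathbb{R}^3$ with linear dependence on $l$, and the claim that each projection center becomes a new vertex (an arbitrarily chosen $p\in P$ need not be extreme in the hull of the lifted points). Finally, the prefactor $(l-d)/d!$ does not arise from any telescoping loss across dimensions; reading it off the theorem statement and attributing it to ``careful bookkeeping'' is reverse engineering, not a derivation.

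The paper's actual proof avoids projections entirely and is much shorter. Let $X$ have no $l$ points on a common hyperplane and let $S\subseteq X$ be a maximal subset in general position. By maximality every point of $X\setminus S$ lies on one of the $\binom{|S|}{d}$ hyperplanes spanned by $d$-element subsets of $S$, and each such hyperplane carries at most $l-d-1$ points of $X\setminus S$, so $|X|\le (l-d-1)\binom{|S|}{d}+|S|$. Hence any set of more than $(l-d-1)\binom{\ES_d(n)-1}{d}+\ES_d(n)-1$ points contains either $l$ points on a hyperplane or $\ES_d(n)$ points in general position, and the latter contain $n$ points in convex position. The prefactor is just $(l-d-1)\binom{m}{d}\approx (l-d)\,m^d/d!$ and the exponent is $\ES_d(n)^d=2^{O(dn/\log_{(4)}n)}$ via Pohoata--Zakharov. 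If you want to salvage your route you would have to supply the general-position extraction at each lift; otherwise the ``maximal general-position subset plus counting rich hyperplanes'' argument is the one that actually closes.
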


Generalizing the argument in \cite{KV}, we obtain the following lower bounds.

\begin{thm} \label{thm1.2}
There is a constant $c = c_d > 1$ such that 
if $d$ is even, 
$$
\ES_d (l,n) \geq \frac{l-1}{d} \cdot 2^{c_d n^{\frac{1}{d-1}}},
$$
if $d$ is odd, 
$$
\ES_d (l,n) \geq \frac{l-2}{d-1} \cdot 2^{c_d n^{\frac{1}{d-1}}}.
$$

\end{thm}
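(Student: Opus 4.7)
The plan is to extend the K\'arolyi--Valtr construction \cite{KV} by a \emph{blow-up}. I take a K\'arolyi--Valtr point set $P \subset \mathbb{R}^d$ of size $|P| = 2^{c_d n^{1/(d-1)}}$ in general position with no $n$ points in convex position (with $c_d$ chosen generously enough to absorb a constant-factor loss from running \cite{KV} with a slightly larger parameter). For each $p \in P$, I pick a short line segment $\ell_p$ through $p$ with a generically chosen direction $v_p$, and replace $p$ by a cluster $C_p$ of $k$ distinct points on $\ell_p$. Setting $k = \lfloor (l-1)/d \rfloor$ when $d$ is even and $k = \lfloor (l-2)/(d-1) \rfloor$ when $d$ is odd, the resulting set $P' := \bigcup_{p \in P} C_p$ has size $k \cdot |P|$, matching the desired lower bound. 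Two properties must then be verified: that no hyperplane of $\mathbb{R}^d$ contains $l$ points of $P'$, and that no $n$ points of $P'$ are in convex position.

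For the hyperplane count, any $H \subset \mathbb{R}^d$ meets each line $\ell_p$ in at most one point unless $\ell_p \subset H$, in which case $H$ contains all $k$ points of $C_p$. The inclusion $\ell_p \subset H$ imposes the two linear conditions $p \in H$ and $v_p \parallel H$ on the $d$-parameter family of hyperplanes, and a short generic-position argument (using that $\operatorname{span}\{v_{p_i}\} + \operatorname{span}\{p_i - p_1\}$ has dimension $2r - 1$ for $r$ generically chosen clusters) shows that at most $\lfloor d/2 \rfloor$ cluster lines can lie in a common $H$. The remaining $d - 2\lfloor d/2 \rfloor$ degrees of freedom (equal to $0$ for even $d$ and $1$ for odd $d$) accommodate at most that many further single-point contributions from other clusters on $H$, yielding
\[
|H \cap P'| \leq \left\lfloor \tfrac{d}{2} \right\rfloor k + \left(d - 2\left\lfloor \tfrac{d}{2} \right\rfloor\right) \leq l - 1,
\]
and the parity of $d$ is exactly what distinguishes the two cases in the statement.

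For the convex-position count, since each $C_p$ is collinear, any convex-position subset of $P'$ takes at most two points per cluster, so an $n$-element such subset must involve at least $\lceil n/2 \rceil$ distinct clusters. Choosing the segment lengths $|\ell_p|$ much smaller than the minimum non-degeneracy gap of $P$ (for instance, the minimum distance from any $p \in P$ to any hyperplane through $d$ other points of $P$), a limiting argument shows that the centers of the involved clusters themselves form a convex-position subset of $P$ of size $\geq \lceil n/2 \rceil$, contradicting the choice of $P$ (with the loss of a factor of $2$ in $n$ absorbed into $c_d$). This convex-position step is the main obstacle: cluster pairs in $P'$ can sit along edges of a convex polytope significantly more degenerate than anything available in $P$, so the projection-back-to-centers argument requires a quantitatively careful choice of segment lengths against the minimum distances and angles of the underlying K\'arolyi--Valtr configuration.
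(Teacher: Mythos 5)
Your proposal is correct and follows essentially the same route as the paper: both replace each point of the K\'arolyi--Valtr configuration by a short collinear cluster of roughly $l/d$ points, bound hyperplane incidences by the fact that at most $\lfloor d/2 \rfloor$ pairwise skew cluster lines (plus $d - 2\lfloor d/2\rfloor$ stray points) can lie in a common hyperplane, and bound convex position by the at-most-two-points-per-cluster observation. The paper's only real refinements are that it places the clusters on the segments $[x - v(x),\, x + v(x)]$ over the penultimate iterate $X_{i-1}$, so the cluster endpoints are exactly the points of $X_i$ and one gets the exact identity $mc(Y_i) = mc(X_i)$ with no factor-of-$2$ loss in $n$, and that it uses the full hyperplane budget of $\lfloor 2(l-1)/d\rfloor$ points per cluster rather than your $\lfloor (l-1)/d\rfloor$ (compensated in your version by using twice as many cluster centers).
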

Here the order of the lower bound is roughly 
$$
\ES_d (l,n) \geq \frac{l - d + 2 \lfloor \frac{d}{2} \rfloor - 1}{2 \lfloor \frac{d}{2} \rfloor} \cdot 2^{c_d n^{\frac{1}{d-1}}} \gtrsim \frac{l-1}{d} \cdot 2^{c_d n^{\frac{1}{d-1}}}.
$$

\section{Upper bound} 
As mentioned above, $\ES_d (n) \le \ES_{d-1}(n)$ holds. Similarly, one might think that $\ES_d (l, n) \le \ES_{d-1} (l, n)$ holds by applying projection arguments. 

However, this is not always true. This is because when projecting a $d$-dimensional point configuration onto $(d-1)$-dimensional space and then trying to reconstruct the $d$-dimensional configuration, $n$-points in convex position in $(d-1)$-dimensions are not necessarily general position when projected back to $d$-dimensions.
It should be noted once again that, in order for a set of points to be in convex position, it must be in general position.
We did not find any proof using projection techniques to establish this type of inequality. Our approach is as follows:

Let $X$ be a set of points in $\mathbb{R}^d$ contains no $m$ points in general position and no $l$ points on the same $(d-1)$-dimensional hyperplane. To give the upper bound for $\ES_d (l, n)$, it suffices to estimate the size of $X$.
\begin{proof}[Proof of Theorem \ref{thm1.1}] 
Let  $S (\subseteq X)$ be a maximal set of points in general position.
Then, each point in $X \backslash S$ lies on a $(d-1)$-dimensional hyperplane determined by some $d$ points in $S$. 
Otherwise, there exists a subset of $X$ in general position that has a size of bigger than $|S|$. 
By choosing $d$ points from $S$, the set $S$ defines $\binom{|S|}{d}$ hyperplanes and each hyperplane contains at most $l-(d+1)$ points in $X \backslash S$. This implies 
$|X| \leq (l-d-1) \cdot \binom{|S|}{d} + |S| \leq (l-d-1) \cdot \binom{m-1}{d} + m - 1$.
Therefore, every set of at least $(l-d-1) \cdot \binom{m-1}{d} + m$ points in $\mathbb{R}^d$ contains either $l$ points on the same $(d-1)$-dimensional hyperplane or $m$ points in general position. 
Then, it immediately follows that
$$
\ES_d (l,n) \leq (l-d-1) \cdot \binom{\ES_d (n)-1}{d} + \ES_d (n).
$$
Together with the upper bound $\ES_d (n) = 2^{o(n)}$ by Pohoata and Zakharov \cite{PZ}, the conclusion follows.

\end{proof}

For $d=2$, the same approach can be used to obtain the following upper bound;
$$
\ES_2 (l,n) \leq (l-3) \cdot \binom{\ES_2 (n)-1}{2} + \ES_2 (n) \lesssim l \cdot  4^{n + O(\sqrt{n\log n})}
$$
However, this bound is weaker than the one given by Conlon et al. unless $l$ is significantly larger than $n$.

\section{Lower bound}
As we mentioned above, Kalolyi and Valtr \cite{KV} proved $\ES_d (n) \geq 2^{c_d n^{\frac{1}{d-1}}}$ for some $c_d > 1$.  
Their construction is as follows: 
Start with one point set $X_0$, and $X_{i+1}$ is obtained from $X_i$ by replacing each point $x \in X_i$ with two points $x - v(x)$ and $x + v(x)$ where $v(x) = (v^1 (x), \ldots, v^d (x))$ is a vector which satisfies $0 < v^1 (x) < v^2 (x) < \cdots < v^d (x) < \varepsilon_i$ and $v^f (x) < \varepsilon_i v^{f+1} (x)$ for every $1 \leq f \leq d - 1$ and $\varepsilon_i > 0$ be sufficiently small.
Then, apply a small perturbation (i.e. taking $\varepsilon_i$ sufficiently small) to $X_{i+1}$ to be in general position.
By construction, we have $|X_i| = 2^i$, and the key lemma is that the inequality
\begin{equation}\label{eq:eq01}
mc (X_{i+1}) \leq mc (X_i) + mc (\pi_{d-1} (X_i))
\end{equation}
holds where $mc(X) \coloneqq \textrm{max} \{ |S| : S (\subseteq X) \, \mbox{is in convex position} \}$ and $\pi_{d-1} : \mathbb{R}^d \rightarrow \mathbb{R}^{d-1}$ is the projection to the $(d-1)$-dimensional hyperplane $\{ x_d = 0 \}$.

\begin{figure}[htbp]
\tikzset{every picture/.style={line width=0.75pt}} 


\caption{}
\end{figure}

To prove Theorem 2, we consider the set constructed by replacing many points on the line segment $conv(x-v(x), x+v(x))$ for each $i \geq 1$ and each $x \in X_i$.
The details are as follows.
For a point $x \in X$, we define the following set  
$$
P_x (\varepsilon, d, l) \coloneqq 
\begin{cases}
\{ x \pm \frac{1}{k} v(x) : k=1, \ldots, \frac{1}{2} \lfloor \frac{2(l-1)}{d} \rfloor \} & (d : \mbox{even},\, l : \mbox{odd}) \\
\{ x \pm \frac{1}{k} v(x) : k=1, \ldots, \frac{1}{2} \lfloor \frac{2(l-1)}{d} \rfloor - \frac{1}{2} \} \cup \{ x \} & (d : \mbox{even},\, l : \mbox{even}) \\
\{ x \pm \frac{1}{k} v(x) : k=1, \ldots, \frac{1}{2} \lfloor \frac{2(l-2)}{d-1} \rfloor \} & (d : \mbox{odd},\, l : \mbox{even}) \\
\{ x \pm \frac{1}{k} v(x) : k=1, \ldots, \frac{1}{2} \lfloor \frac{2(l-3)}{d-1} \rfloor - \frac{1}{2} \} \cup \{ x \} & (d : \mbox{odd},\, l : \mbox{odd}).
\end{cases}
$$ 
and define $Y_{i+1}$ as the set obtained by replacing each point $x$ in $X_i$ with $P_x (\varepsilon_i, d, l)$. 
See Figure 1.
If it is not a cause for ambiguity, we will denote $P_x \coloneqq P_x (\varepsilon, d, l)$.

We say that two finite sets $\{ p_1, \ldots, p_n \} \subset \mathbb{R}^d$ and $\{ q_1, \ldots, q_n \} \subset \mathbb{R}^d$ have the $\textit{same order type}$ if the orientations of $p_{i_1} \cdots p_{i_{d+1}}$ and $q_{i_1} \cdots q_{i_{d+1}}$ are the same for all $1 \leq i_1 \leq, \ldots, i_{d+1} \leq n$, that is, 
$$
sgn\bigg(det\begin{pmatrix}
p_{i_1} & p_{i_2} & \cdots & p_{i_{d+1}} \\
1 & 1 & \cdots & 1
\end{pmatrix}\bigg) 
= 
sgn\bigg(det\begin{pmatrix}
q_{i_1} & q_{i_2} & \cdots & q_{i_{d+1}} \\
1 & 1 & \cdots & 1
\end{pmatrix}\bigg)
\in \{ \pm 1, 0 \}.
$$

\begin{prop}
For each $i \geq 1$, the set $Y_i$ contains no $l$ points on the same $(d-1)$-dimensional hyperplane.

\end{prop}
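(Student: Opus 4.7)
The plan is to fix an arbitrary hyperplane $H \subset \mathbb{R}^d$ and show $|H \cap Y_i| \le l-1$, combining a line/hyperplane intersection dichotomy, a dimension-count for incidences, and an arithmetic check of the formula defining $|P_x|$.

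For each $x \in X_{i-1}$, the cluster $P_x$ lies on the line $\ell_x$ through $x$ with direction $v(x)$, so $H$ either contains $\ell_x$ entirely or meets $\ell_x$ in at most one point. Hence $|H \cap P_x| = |P_x|$ in the first case and $|H \cap P_x| \le 1$ in the second. Writing
\[
k = \#\{x \in X_{i-1} : \ell_x \subset H\}, \qquad m = \#\{x \in X_{i-1} : \ell_x \not\subset H,\ H \cap P_x \ne \varnothing\},
\]
the identity $|H \cap Y_i| = k \cdot |P_x| + m$ reduces the claim to the bound $k \cdot |P_x| + m \le l-1$.

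The key structural inequality is $2k + m \le d$. A hyperplane in $\mathbb{R}^d$ has $d$ affine degrees of freedom, containing a prescribed line imposes $2$ linear conditions, and passing through a prescribed point imposes $1$. Thus for any choice of $k$ distinct cluster-lines and $m$ distinct points from clusters outside those lines, the joint incidence conditions on $H$ are generically linearly independent, forcing $2k + m \le d$ whenever such an $H$ exists. This independence can be enforced by a small generic perturbation of the directions $v(x)$, carried out simultaneously with the general-position perturbation already imposed on $X_{i-1}$: only finitely many incidence subvarieties of the parameter space $\prod_{x} \R^d$ need to be avoided. In particular $k \le \lfloor d/2 \rfloor$ and $m \le d - 2k$.

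Combining gives $|H \cap Y_i| \le k(|P_x| - 2) + d$. When $|P_x| \ge 2$ this is maximized at $k = \lfloor d/2 \rfloor$; when $|P_x| \le 1$ it is maximized at $k = 0$ and equals $d \le l-1$ (using $l > d$). A parity-by-parity check against the definition of $|P_x|$ settles the remaining case: for $d$ even,
\[
\tfrac{d}{2} \left\lfloor \tfrac{2(l-1)}{d} \right\rfloor \le l - 1,
\]
and the analogous calculations for $d$ odd with $l$ even and $l$ odd respectively yield $\tfrac{d-1}{2}\lfloor \tfrac{2(l-2)}{d-1}\rfloor + 1 \le l - 1$ and $\tfrac{d-1}{2}\lfloor \tfrac{2(l-3)}{d-1}\rfloor + 1 \le l - 1$. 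This is precisely the arithmetic motivation for the tailored four-case formula defining $P_x$.

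The main obstacle is the dimension-count in the second paragraph: one must exhibit a single generic choice of $(v(x))_{x \in X_{i-1}}$ that simultaneously rules out every over-determined hyperplane incidence among the finitely many subsets of $Y_i$, while remaining compatible with the general-position perturbation of $X_{i-1}$ and with the inductive construction of the $X_j$'s. This is a standard Baire-type argument on the finite-dimensional parameter space of directions, in the spirit of the K\'arolyi--Valtr perturbation scheme recalled at the start of the section.
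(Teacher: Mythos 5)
Your proof is correct and follows essentially the same route as the paper's: both rest on the observation that, by genericity of the cluster lines, a hyperplane can absorb at most $\lfloor d/2 \rfloor$ whole clusters plus (for $d$ odd) one further point, and then perform the same parity-by-parity arithmetic check of the four-case formula for $|P_x|$. Your explicit constraint $2k+m\le d$ and the optimization over $k$ merely systematize the extremal-configuration count that the paper states directly via the affine hulls of skew lines.
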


\begin{proof}
Set $X_{i-1} = \{ x_1, \ldots, x_{2^{i-1}} \}$ and we pick a point $p_{x_k}$ from each $P_{x_k} \,\,\,\,\, (k \in \{ 1, \ldots, 2^{i-1} \})$.
Note that since a subset $\{ p_{x_1}, \ldots, p_{x_{2^{i-1}}} \}$ of $Y_i = \bigcup_{k=1}^{2^{i-1}} P_{x_k}$ and $X_{i-1}$ has the same order type, and $X_{i-1}$ is in general position, every hyperplane contain at most $d+1$ distinct $P_{x_k}$'s. 
The affine hull of each $P_{x_k}$ is a line and they are all in a skew position. Therefore, if $d$ is even, the affine hull of the union of any $\frac{d}{2}$ $P_{x_k}$'s is a $(d-1)$-flat (i.e., a hyperplane). This $(d-1)$-flat contains $\frac{d}{2} \lfloor \frac{2}{d} (l-1) \rfloor \leq l-1$ points from $Y_i$. If $d$ is odd, the union of any $\frac{d-1}{2}$ $P_{x_k}$'s forms a $(d-2)$-flat. By taking an additional point from one of the other $P_{x_k}$, their affine hull is a hyperplane. This hyperplane contains $\frac{d-1}{2} \lfloor \frac{2}{d-1} (l-2) + 1 \rfloor \leq l-1$ points from $Y_i$. See Figure 2.
\end{proof}

\begin{figure}[htbp]

\tikzset{every picture/.style={line width=0.75pt}} 



\caption{}

\end{figure}

\begin{lem}
For each $i \geq 1$,
$$mc (Y_i) = mc (X_i).$$
\end{lem}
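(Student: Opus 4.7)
By construction, for every $x \in X_{i-1}$ the pair $\{x - v(x), x + v(x)\}$ (corresponding to $k=1$) lies in $P_x$, so $X_i \subseteq Y_i$ and hence $mc(X_i) \leq mc(Y_i)$ is immediate. The content of the lemma is the reverse inequality, and my plan is to take a convex-position subset $S \subseteq Y_i$ of maximum size and produce an injection $\psi \colon S \hookrightarrow X_i$ whose image is again in convex position, which will give $|S| \leq mc(X_i)$.

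First, $|S \cap P_x| \leq 2$ for every $x \in X_{i-1}$: otherwise $S$ would contain three collinear points on the line $l_x = \{x + tv(x) : t \in \R\}$, contradicting convex position. Writing $P_x^+$ (resp.\ $P_x^-$) for the points of $P_x$ lying strictly above (resp.\ below) $x$ along $v(x)$, I next claim one may further assume $|S \cap P_x^+| \leq 1$ and $|S \cap P_x^-| \leq 1$ for every $x$. Suppose not: $S$ contains two points $q_1 = x + \alpha_1 v(x)$ and $q_2 = x + \alpha_2 v(x)$ with $\alpha_1 > \alpha_2 \geq 0$. Set $Q = \mathrm{conv}(S \setminus P_x)$. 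Writing the supporting-hyperplane conditions at $q_1$ and $q_2$ shows that the ray from $q_1$ in direction $+v(x)$ and the ray from $q_2$ in direction $-v(x)$ each avoid $Q$; because $\varepsilon_{i-1}$ is chosen so small that the whole cluster $P_x$ lies far closer to $x$ than to any other cluster, these two conditions force the entire line $l_x$ to avoid $Q$. Replace $q_2$ by $x - v(x)$. In the resulting set $S'$, the new lower point $x - v(x)$ is a vertex because the ray from it in direction $-v(x)$ still misses $Q$, $q_1$ is still a vertex by its unchanged ray condition, and every other vertex of $S$ survives the $O(\varepsilon_{i-1})$-perturbation of $q_2$ by its strict supporting hyperplane; iterating clusterwise yields the normal form.

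Now define $\psi(x \pm \tfrac{1}{k} v(x)) := x \pm v(x)$ for $k \geq 1$ and $\psi(x) := x + v(x)$ (the latter only when $x \in P_x$). After the reduction $\psi|_S$ is injective, so $|\psi(S)| = |S|$ and $\psi(S) \subseteq X_i$. Every point of $S$ moves by at most $|v(x)| \leq \varepsilon_{i-1}$, and since $S$ is in convex position the supporting hyperplane at each vertex separates it from the rest of $S$ by a positive margin; for $\varepsilon_{i-1}$ sufficiently small (as the construction allows), these strict separations survive the perturbation, so $\psi(S)$ is in convex position. Therefore $|S| = |\psi(S)| \leq mc(X_i)$, finishing the proof. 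The main obstacle is the same-side reduction: one must upgrade the intuition ``two same-side vertices force $l_x$ out of $Q$'' into the ray-and-perturbation argument above and verify that each substitution preserves the vertex status of every other point of $S$.
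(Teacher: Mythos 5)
Your strategy coincides with the paper's: prove $mc(X_i)\le mc(Y_i)$ from the inclusion $X_i\subseteq Y_i$, and for the converse take a convex-position set $S\subseteq Y_i$, observe $|S\cap P_x|\le 2$, and push the chosen cluster points out to the segment endpoints $x\pm v(x)\in X_i$. The paper simply replaces $S\cap P_x$ by \emph{both} endpoints of $\mathrm{conv}(P_x)$ for every cluster that $S$ meets, which makes the cardinality inequality automatic and renders your same-side normalization and the injectivity of $\psi$ unnecessary; note also that, as literally stated, your $\psi$ is not injective when $S$ contains both the centre $x$ and a point of $P_x^{+}$ (both are sent to $x+v(x)$) --- your reduction with $\alpha_2\ge 0$ does cover this case, but the normal form you record, $|S\cap P_x^{\pm}|\le 1$, does not exclude it. The one substantive caution concerns your final perturbation step: the blanket claim that each vertex's strict separation ``survives the $O(\varepsilon_{i-1})$-perturbation'' cannot be applied to two points of the \emph{same} cluster, whose mutual separation margin is itself only $O(\varepsilon_{i-1})$ or smaller, so the margin argument is circular exactly there. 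For those pairs you should instead use your own ray observation: the outward normal of the supporting hyperplane at $q_{+}=x+\alpha v(x)$ necessarily has positive inner product with $v(x)$ because the other cluster point lies on the far side, so sliding $q_{+}$ outward to $x+v(x)$ keeps it strictly beyond that same hyperplane, and symmetrically for $q_{-}$. With that substitution the argument closes, and it is in fact more detailed than the paper's own justification, which dismisses the convex-position claim for the modified set with ``it is easy to see.''
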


\begin{proof}
Since $X_i$ is a subset of $Y_i$ for each $i \geq 1$, $mc(X_i) \leq mc(Y_i)$ holds.

For simplicity, we denote $Y_i$ by $\bigcup_{k=1}^{2^{i-1}} P_{x_k}$. 
Let $C$ be an arbitrary subset of $Y_i$ in convex position. 
By the definition of convex position, we have $|C \cap P_{x_k}| \leq 2$ for every $k \in \{1, \dots, 2^{i-1}\}$. 
For such a subset $C$, consider the subset $C' \subset X_i$ defined by
\[
    C' \coloneqq \bigcup_{1 \leq |C \cap P_{x_k}| \leq 2} \partial(\mathrm{conv}(P_{x_k})).
\]
It is easy to see that $C'$ is in convex position and satisfies $|C| \leq |C'|$. See Figure 3.
\end{proof}

\begin{figure}[htbp]

\tikzset{every picture/.style={line width=0.75pt}} 



\caption{}
\end{figure}

\begin{proof}[Proof of Theorem \ref{thm1.2}]

Thus, from \eqref{eq:eq01} and Lemma 4, we obtain $mc(Y_i) \leq 2 i^{d-1}$ by double induction.
Finally, simple counting shows that
$$
\begin{aligned}
|Y_i| &=
\begin{cases}
\lfloor \frac{2(l-1)}{d} \rfloor \cdot 2^{i-1} & (d : \mbox{even}) \\
\lfloor \frac{2(l-2)}{d-1} \rfloor \cdot 2^{i-1} & (d : \mbox{odd})
\end{cases} \\
&\geq 
\begin{cases}
\frac{l-1}{d} \cdot 2^i & (d : \mbox{even}) \\
\frac{l-2}{d-1} \cdot 2^i & (d : \mbox{odd})
\end{cases} \\
&= 
\frac{l - d + 2 \lfloor \frac{d}{2} \rfloor - 1}{2 \lfloor \frac{d}{2} \rfloor} \cdot 2^i
\end{aligned}
$$

Therefore, we can conclude that there exists a set of $\frac{l - d + 2 \lfloor \frac{d}{2} \rfloor - 1}{2 \lfloor \frac{d}{2} \rfloor} \cdot 2^{c_d n^{\frac{1}{d-1}}}$ points in $\mathbb{R}^d$ with no $l$ points on the same $(d-1)$-dimensional hyperplane and no $n$ points in convex position, which proves Theorem 2.

\end{proof}
\,\\
\textbf{Remark.}
K\'{a}rolyi and Valtr also showed that $\textrm{mc} (X_i) \leq \frac{2}{(d-1)!} i^{d-1} + O(i^{d-2})$. See Appendix in \cite{KV}. This gives $c_d \approx 2^{e^{-1} d} \approx 2^{0.37 d}$.

\section{Concluding Remarks}
We proved that $\frac{l-1}{d} \cdot 2^{c_d n^{\frac{1}{d-1}}} \lesssim \ES_d (l, n) \leq \frac{l-d}{d!} \cdot 2^{O(\frac{dn}{\log_{(4)} n})}$. 
We believe that Theorem 2 is optimal for all $d > 2$, except for the exact value of the constant $c_d$.
It might be useful to determine the following value  $C_d (l, n)$ to give a better upper bound: 
$C_d (l,n)$ is the minimum $N$ such that every set of $N$-points in $\mathbb{R}^d$ in weakly convex position contains either $l$ points on the same $(d-1)$-dimensional hyperplane or $n$ in convex position 
where a set of points $P (\subset \mathbb{R}^d)$ is said to be in \textit{weakly convex position} if $P \subseteq \partial (conv(P))$. 
We believe that $C_d (l,n) \approx \frac{nl}{d}$ and if this is shown, it immediately follows that $\ES_d (l, n) \approx \ES_d (\frac{nl}{d})$. 
However, even if this is possible, it remains asymptotically far from the lower bound.

\,\\
\centerline{{\nolinkurl{aluvajp@gmail.com}}}
\end{document}